\newtheorem{thm}{Theorem}[section] 
\newtheorem*{thm*}{Theorem}
\newtheorem{prop}[thm]{Proposition}
\newtheorem{cor}[thm]{Corollary}
\theoremstyle{definition} 
\newtheorem{rem}[thm]{Remark} 
\newtheorem{exa}[thm]{Example}
\numberwithin{equation}{section}
\newcommand{\alg}[1]{\mathfrak{#1}}
\newcommand{\algg}{\mathfrak{g}}
\begin{document}

\title{Examples of minimal $G$--structures induced by the Lee form}
\author{Kamil Niedzia\l omski}
\date{\today}

\subjclass[2000]{53C10; 53C25; 53C43}
\keywords{G-structure, intrinsic torsion, minimal submanifold, harmonic map, locally conformally K\"ahler manifold, $\alpha$--Kenmotsu manifold}
 
\address{
Department of Mathematics and Computer Science \endgraf
University of \L\'{o}d\'{z} \endgraf
ul. Banacha 22, 90-238 \L\'{o}d\'{z} \endgraf
Poland
}
\email{kamiln@math.uni.lodz.pl}

\begin{abstract}
We compute the condition of minimality of a G-structure for the Gray--Hervella class $\mathcal{W}_4$ of almost hermitian manifolds and  $\mathcal{C}_5$ class of almost contact metric structures. We also consider $\mathcal{C}_4$ class by comparison with the Grey--Hervella class $\mathcal{W}_4$. The common feature is the existence of the Lee form $\theta$ representing these structures. We show that these classes contain minimal G-structures. Here, minimality means minimality of a $G$--structure inside oriented orthonormal frame bundle $SO(M)$ of a Riemannian manifold $M$.
\end{abstract}

\maketitle

\section{Introduction}

Existence of a geometric structure compatible with a Riemannian metric on a manifold is equivalent to reduction of the structure group of (oriented) orthonormal frame bundle ($SO(M)$) $O(M)$ to certain subgroup $G\subset O(n)$, $n=\dim M$. For example, almost hermitian structure is defined by the unitary group $U(n)\subset SO(2n)$, almost contact structure by $U(n)\times 1\subset SO(2n+1)$ or  an almost quaternion--hermitian structure by $Sp(n)Sp(1)\subset SO(4n)$. Considering additionally the Levi--Civita connection $\nabla$ we may ask if this connection is compatible with the given reduction. The failure is measured by the intrinsic torsion. In particular, if the intrinsic torsion vanishes, the holonomy algebra is contained in the Lie algebra $\algg$ of the structure group.

We may classify intrinsic torsion with respect to the action of $G$ obtaining irreducible components, often called Grey--Hervella classes. Another approach was initiated by Wood \cite{wo1,wo2} and, in general case, by Martin--Cabrera and Gonzalez--Davilla \cite{gmc1} by studying harmonicity of induced section of certain homogeneous associated bundle. Is such case, we call $G$--structure harmonic. In \cite{kn} the author studied properties of intrinsic torsion by considering extrinsic geometry of a reduction $P$ inside $SO(M)$. The Riemannian metric on $SO(M)$ is induced from Riemannian metric on $M$ and Killing form on $SO(n)$. If $P$ is a minimal submanifold in $SO(M)$ we call $G$--structure minimal. 

In this note we provide examples of minimal $G$--structures for $G=U(n)$ and $G=U(n)\times 1$. More precisely, we consider $G$--structures for above mentioned $G$'s, which are locally conformally integrable, i.e. locally there is a conformal deformation of a Riemannian metric for which the induced Levi--Civita connections is a $G$-connection. This implies existence of global vector field and by duality, global $1$--form $\theta$ called the Lee form. Conditions for minimality become differential equations on $\theta$. We find examples of $G$--structures, which satisfy condition of minimality. These include Hopf manifolds for $G=U(n)$ (then the Lee form is parallel) and Kenmotsu manifolds for $G=U(n)\times 1$.

We begin by recalling basic information about intrinsic torsion, harmonicity and minimality of $G$--structures. Then we compute the minimality condition for locally conformally K\"ahler class $\mathcal{W}_4$ and class $\mathcal{C}_5$ of contact metric structures. We conclude providing appropriate examples. In a sense, this note provides a completion of the article \cite{kn} by the author, in which minimality of $G$--structures was considered but in which there were examples of such structures only in the case of almost product structures, i.e. for $G=SO(m)\times SO(n-m)$.

\section{Minimal $G$--structures via the intrinsic torsion}

All the information in this section can be found in \cite{gmc1} and \cite{kn}. Let $(M,g)$ be an oriented Riemannian manifold. Consider an oriented orthonormal frame bundle $SO(M)$. Let $\nabla$ denote the Levi--Civita connection of $g$. It induces the horizontal distribution $\mathcal{H}\subset TSO(M)$. Any vector $X\in TM$ has a unique lift $X^h_p$ to $\mathcal{H}_p$, $p\in SO(M)$. Vertical distribution $\mathcal{V}={\rm ker}\pi_{\ast}$, where $\pi:SO(M)\to M$, is a natural projection, is poinwise, isomorphic to the Lie algebra $\alg{so}(n)$ of the structure group $SO(n)$. Denote by $A^{\ast}$ the fundamental vertical vector field induced by an element $A\in\alg{so}(n)$. The Riemannian metric on $SO(M)$ is given as follows:
\begin{align*}
g_{SO(M)}(X^h,Y^h) &= g(X,Y),\\
g_{SO(M)}(X^h,A^{\ast}) &=0,\\
g_{SO(M)}(A^{\ast},B^{\ast}) &=-{\rm tr}(AB),
\end{align*}
where $X\in TM$, $A\in\alg{so}(n)$. Define a structure on $M$ by restricting the structure group $SO(n)$ to a subgroup $G$ such that on the level of Lie algebras, the following decomposition
\begin{equation}\label{eq:sonsplit}
\alg{so}(n)=\algg\oplus \algg^{\bot}
\end{equation}
is ${\rm ad}(G)$--invariant ($\algg^{\bot}$ denotes the orthogonal complement with respect to the Killing form). 

We say that a $G$--structure $M$ is {\it minimal} if the induced subbundle $P$ with the structure group $G$ is minimal as a submanifold inside $SO(M)$. Let us now define the condition of harmonicity of a a $G$--structure and give conditions of minimality and harmonicity via, so called, intrinsic torsion. Let $\omega$ be the connection form of the horizontal distribution $\mathcal{H}$ (induced by $\nabla$). By the invariance of the splitting \eqref{eq:sonsplit} the decomposition
\begin{equation*}
\omega=\omega_\algg+\omega_{\algg^{\bot}}
\end{equation*}
defines a connection $\omega_{\algg}$ on $P$. Denote the horizontal distribution induced by $\omega_{\algg}$ by $\mathcal{H}'$ and associated horizontal lift of $X\in TM$ by $X^{h'}$. Put
\begin{equation*}
\xi_X=-\omega_{\algg^{\bot}}(X^{h'}_p),\quad X\in T_xM,\quad \pi(p)=x.
\end{equation*} 
By ${\rm ad}(G)$--invariance of $\omega_{\algg^{\bot}}$ and the horizontal lift, it follows that $\xi_X$ is defined up to the adjoint action, thus is an element of the adjoint bundle $\algg^{\bot}_P=P\times_{{\rm ad}(G)}\algg^{\bot}$. Thus we may treat $\xi_X$ as an endomorphism $\xi_X:TM\to TM$. One can show that 
\begin{equation*}
\xi_X=\nabla^G_X-\nabla_X,
\end{equation*}  
where $\nabla^G$ is a metric connection on $M$ induced by $\omega_{\algg}$ and
\begin{equation*}
X^h=X^{h'}+(\xi_X)^{\ast}.
\end{equation*}

The reduction $P\subset SO(M)$ defines the unique section $\sigma_P$ of the associated bundle $N=SO(M)\times_{G}(SO(n)/G)$, 
\begin{equation*}
P\ni p\mapsto [[p,eG]]\in N,
\end{equation*}
where $e\in SO(N)$ id the identity element. We define a Riemannian metric on $N$ in a natural way, namely, inducing from the Riemannian metric $g$ on $M$ and restricted Killing form to $\algg^{\bot}$. We say that a $G$--structure $M$ is {\it harmonic} if the induced section $\sigma_P:M\to N$ is a harmonic section. Denote by ${\bf v}W$ the vertical component in $TN$ of a vector $W\in TN$. Then \cite{gmc1} ${\bf v}\sigma_{P\ast}(X)=-\xi_X$, thus harmonicity is coded in the intrinsic torsion. Moreover, we say than a $G$--structure is a {\it harmonic map}, if the unique section $\sigma_P$ is a harmonic map.

Let us state results obtained in \cite{gmc1} and \cite{kn} concerning harmonicity and minimality of $G$--structures. For any endomorphism $T:TM\to TM$ let
\begin{equation}\label{eq:RT}
R_T(X)=\sum_j R(e_j,T(e_j))X,\quad X\in TM.
\end{equation}  
\begin{prop}[\cite{gmc1}]\label{prop:harmonicGstructure}
A $G$--structure $M$ is harmonic if and only if the following condition holds
\begin{equation}\label{eq:harmonicGstructure}
\sum_j (\nabla_{e_j}\xi)_{e_j}=0,
\end{equation}
where $(e_j)$ is a $g$--orthonormal basis. Moroever, a $G$--structure $M$ is a harmonic map if it is a harmonic $G$--structure and 
\begin{equation*}
\sum_j R_{\xi_{e_j}}(e_j)=0.
\end{equation*}
\end{prop}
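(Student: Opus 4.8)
The plan is to use the variational characterisation of harmonic sections (Wood \cite{wo1,wo2}): for a fibre bundle $\pi\colon N\to M$ carrying a bundle metric and a bundle connection, a section $s$ is a harmonic section exactly when the vertical component ${\bf v}\tau(s)$ of its tension field $\tau(s)=\sum_j\bigl(\nabla_{s_\ast e_j}s_\ast e_j-s_\ast(\nabla_{e_j}e_j)\bigr)$ vanishes, and $s$ is moreover a harmonic map exactly when the whole field $\tau(s)$ vanishes. Thus it suffices to compute ${\bf v}\tau(\sigma_P)$ and, assuming it vanishes, the surviving horizontal part of $\tau(\sigma_P)$. (Equivalently, since the horizontal energy of every section equals the constant $\tfrac{n}{2}\,\mathrm{vol}(M)$, the first claim can be obtained by taking the first variation of the vertical energy $s\mapsto\tfrac12\int_M|\xi^{s}|^2$ among sections and integrating by parts.)

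First I would record the geometry of $N=SO(M)\times_G(SO(n)/G)$ relevant here. Its bundle connection is the one induced by the horizontal distribution $\mathcal{H}$ of $\nabla$, so the curvature of this connection is governed by the Riemann tensor $R$ of $g$; the fibres carry the normal homogeneous metric coming from the (restricted) Killing form and are totally geodesic in $N$; and along the image of $\sigma_P$ the vertical bundle is canonically the adjoint bundle $\algg^{\bot}_P$, with connection induced by $\nabla^G$ (equivalently by $\nabla$). As noted above, ${\bf v}\sigma_{P\ast}(X)=-\xi_X$, while the horizontal part of $\sigma_{P\ast}(X)$ is the horizontal lift of $X$ because $\sigma_P$ is a section. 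It is convenient to represent $\xi$ by the corresponding $G$--equivariant $\algg^{\bot}$--valued $1$--form on $P$ and to compute covariant derivatives with $\omega_\algg$.

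Next I would differentiate $\sigma_{P\ast}(e_j)=\widetilde{e_j}-\xi_{e_j}$ (horizontal lift minus vertical part) and split $\nabla_{\sigma_{P\ast}e_j}\sigma_{P\ast}e_j-\sigma_{P\ast}(\nabla_{e_j}e_j)$ into horizontal and vertical parts by the O'Neill formulas for the submersion $N\to M$. The horizontal lifts of $\nabla_{e_j}e_j$ cancel; using total geodesy of the fibres and the defining relation $\xi_X=\nabla^G_X-\nabla_X$, the vertical part of the trace collapses to $-\sum_j(\nabla_{e_j}\xi)_{e_j}$, giving the first equivalence. When this expression vanishes, the remaining horizontal part of $\tau(\sigma_P)$ comes only from O'Neill's integrability tensor $A$ applied to the vertical components $\xi_{e_j}$; since $A$ is, up to a factor, the curvature of the connection of $N$, and that curvature acts on $\algg^{\bot}_P$ through $R$, the trace rearranges into $\sum_j R_{\xi_{e_j}}(e_j)$ with $R_T$ as in \eqref{eq:RT}. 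Requiring this to vanish as well yields the harmonic--map criterion.

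The main obstacle is the bookkeeping in the third step: one must express the Levi--Civita connection of the submersion metric on $N$ through $\nabla$, $\omega_\algg$ and the fibrewise geometry of $SO(n)/G$; verify that the fibres are totally geodesic for the normal homogeneous metric, so that their second fundamental form drops out of the relevant traces; and correctly identify the horizontal curvature of $N$ with the Riemann curvature $R$ of $g$, read through the inclusion $\algg^{\bot}\subset\alg{so}(n)$. I would carry all of this out equivariantly on $P$, as indicated above; the remainder is linear algebra and contraction.
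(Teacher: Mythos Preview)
The paper does not supply its own proof of this proposition; it is quoted verbatim as a result of Gonz\'alez-D\'avila and Mart\'in Cabrera \cite{gmc1}. Your sketch is the standard argument of that reference: compute the tension field of the section $\sigma_P$ using the O'Neill formalism for the Riemannian submersion $N\to M$ with totally geodesic fibres, identify its vertical part with $-\sum_j(\nabla_{e_j}\xi)_{e_j}$ via ${\bf v}\sigma_{P\ast}(X)=-\xi_X$, and recognise the residual horizontal part as the curvature contraction $\sum_j R_{\xi_{e_j}}(e_j)$. The outline is correct and there is nothing to compare against in the present paper.
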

Consider a Riemannian metric $\tilde{g}$ on $M$ defined by
\begin{equation}\label{eq:tildeg}
\tilde{g}(X,Y)=g(X,Y)+\sum_j g(\xi_Xe_j,\xi_Ye_j),\quad X,Y\in TM,
\end{equation}   
where $(e_j)$ is any $g$--orthonormal basis.
\begin{prop}[\cite{kn}]\label{prop:minimalGstructure}
A $G$--structure $M$ is minimal if and only if the following condition holds
\begin{equation}\label{eq:minimalGstructure}
\sum_j (\nabla_{\tilde{e}_j}\xi)_{\tilde{e}_j}+\xi_{R_{\xi_{\tilde{e_j}}}(\tilde{e}_j)}=0,
\end{equation}
where $(\tilde{e}_j)$ is any $\tilde{g}$--orthonormal basis. Alternatively, if and only if the section $\sigma_P:M\to N$ is a harmonic map, where we consider the Riemannian metric $\tilde{g}$ instead of $g$ on $M$.
\end{prop}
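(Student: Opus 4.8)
The plan is to reduce the minimality of $P\subset SO(M)$ to a harmonicity statement for the section $\sigma_P$ with respect to the deformed metric $\tilde g$, and then invoke the harmonic‑map characterization already available. First I would recall the general fact (from \cite{kn}) that a submanifold $P$ of a Riemannian manifold is minimal precisely when its mean curvature vector vanishes, and that for the graph‑type embedding $P\hookrightarrow SO(M)$ determined by the connection $\omega_{\algg}$, the induced metric on $P$ is isometric to $(M,\tilde g)$ via $\pi$: indeed $X^h = X^{h'}+(\xi_X)^{\ast}$, so that
\begin{equation*}
g_{SO(M)}(X^h,Y^h) = g(X,Y) - \operatorname{tr}(\xi_X\xi_Y) = g(X,Y)+\sum_j g(\xi_Xe_j,\xi_Ye_j) = \tilde g(X,Y),
\end{equation*}
where the sign works out because $\xi_X\in\algg^{\bot}\subset\alg{so}(n)$ is skew. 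This identifies the intrinsic geometry of $P$ with $(M,\tilde g)$.

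Next I would compute the second fundamental form of $P$ in $SO(M)$. Using the O'Neill‑type formulas for the Riemannian submersion $\pi:SO(M)\to M$ and the horizontal lift $X^{h'}$ with respect to $\omega_{\algg}$, the normal bundle of $P$ is modeled fiberwise by $\algg^{\bot}_P$, and the normal component of $\nabla^{SO(M)}_{X^{h'}}Y^{h'}$ is, up to identification, governed by $(\nabla_X\xi)_Y$ together with a curvature correction coming from the bracket $[X^{h'},Y^{h'}]$, whose vertical part is the curvature of $\omega$. Tracing over a $\tilde g$‑orthonormal basis $(\tilde e_j)$ of $P\cong(M,\tilde g)$, the mean curvature vector is exactly the normal‑bundle object $\sum_j\big((\nabla_{\tilde e_j}\xi)_{\tilde e_j}+\xi_{R_{\xi_{\tilde e_j}}(\tilde e_j)}\big)$, where the curvature term $R_{\xi_{\tilde e_j}}$ arises from the vertical part of $[\,\cdot\,,\cdot\,]$ acting through $\omega_{\algg^{\bot}}$; this is precisely \eqref{eq:minimalGstructure}. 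The alternative formulation then follows because the tension field of $\sigma_P:(M,\tilde g)\to N$ differs from the mean curvature of $P$ only by terms lying in the image of $\sigma_{P\ast}$ (a consequence of the composition formula for the tension field and the fact, noted above, that ${\bf v}\sigma_{P\ast}(X)=-\xi_X$), so the vanishing of one is equivalent to the vanishing of the other once $g$ is replaced by $\tilde g$ in the domain.

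I expect the main obstacle to be the careful bookkeeping in the second step: one must correctly separate the horizontal and vertical components of $\nabla^{SO(M)}_{X^{h'}}Y^{h'}$, keep track of which curvature — that of $\nabla$ on $M$, or that of $\omega_{\algg}$ — contributes to the vertical bracket term, and verify that after projecting onto $\algg^{\bot}$ (the genuine normal directions, since $\algg$‑directions are tangent to $P$) the curvature correction collapses exactly to $\xi_{R_{\xi_X}(Y)}$ with $R_T$ as in \eqref{eq:RT}. A secondary technical point is that the trace must be taken with respect to $\tilde g$, not $g$: since $(\tilde e_j)$ is $\tilde g$‑orthonormal, one should either argue invariantly that the mean curvature is the $\tilde g$‑trace of the second fundamental form, or change bases explicitly using the symmetric positive‑definite endomorphism relating $g$ and $\tilde g$. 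Everything else — the identification of the induced metric, the model of the normal bundle, and the passage to the harmonic‑map reformulation — is formal once Proposition~\ref{prop:harmonicGstructure} and the structure equations for $SO(M)$ are in hand.
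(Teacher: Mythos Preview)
The paper does not prove this proposition: it is quoted from \cite{kn} and stated here without argument, so there is no in-paper proof to compare against. Your outline is the natural one and matches the strategy of \cite{kn}: identify the induced metric on the $\omega_{\algg}$-horizontal part of $TP$ with $\tilde g$, compute the second fundamental form of $P\subset SO(M)$, and take the $\tilde g$-trace.

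Two corrections are worth noting. First, the displayed identity should read $g_{SO(M)}(X^{h'},Y^{h'})=\tilde g(X,Y)$, not $g_{SO(M)}(X^{h},Y^{h})$: by definition $g_{SO(M)}(X^{h},Y^{h})=g(X,Y)$, and it is only after substituting $X^{h'}=X^{h}-(\xi_X)^{\ast}$ that the $-\operatorname{tr}(\xi_X\xi_Y)$ term appears. Second, $P$ is not isometric to $(M,\tilde g)$ via $\pi$; it is a principal $G$-bundle over it. What is true is that $\pi|_P:P\to (M,\tilde g)$ is a Riemannian submersion with totally geodesic fibres (the fibres of $SO(M)\to M$ are totally geodesic for this metric, and $G\subset SO(n)$ is totally geodesic because $[\algg,\algg]\subset\algg$), so the $\algg$-vertical contribution to the mean curvature vanishes and the computation does reduce to the horizontal $\tilde g$-trace you describe. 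You should make that step explicit rather than suppress the fibre directions.

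With those adjustments your sketch is sound; the curvature bookkeeping you flag as the main obstacle is exactly where the work lies, and \cite{kn} carries it out.
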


\begin{rem}\label{rem:harm}
\begin{enumerate}
\item Recall that condition for harmonicity of a map $\sigma_P:(M,\tilde{g})\to N$ is of the following form
\begin{equation*}
\sum_j (\nabla_{\tilde{e}_j}\xi)_{\tilde{e}_j}=\sum_j\xi_{S_{\tilde{e}_j}\tilde{e}_j}\quad\textrm{and}\quad \sum_j R_{\xi_{\tilde{e}_j}}(\tilde{e}_j)=-\sum_j S_{\tilde{e}_j}\tilde{e}_j,
\end{equation*}
where $S$ is the difference of the Levi-Civita connection $\tilde{\nabla}$ of the metric $\tilde{g}$ and the Levi--Civita connection $\nabla$ of the metric $g$ \cite{kn}.
\item Notice that in \cite{kn} the author considered intrinsic torsion differing by the sign form the intrinsic torsion coisidered in this article and by other authors.
\end{enumerate}
\end{rem}

\section{Examples -- Locally conformally K\"ahler and contact metric structures}

In this sections we will will compute the condition \eqref{eq:harmonicGstructure} for special choices of $G$ and certain $G$--modules of the intrinsic torsion. Namely we consider $G=U(n)$, with the intrinsic torsion belonging to the class $\mathcal{W}_4$ (see \cite{gh}) and $G=U(n)\times 1$ with the intrinsic torsion belonging to the classes $\mathcal{C}_4$ and $\mathcal{C}_5$ (see \cite{cg}). Let us list common features of these considerations:
\begin{itemize}
\item The considered structures are induced by a $1$--form $\theta$ called the Lee form. Moreover, for the associated vector field $\theta^{\sharp}$ we have $\xi_{\theta^{\sharp}}=0$ (if $G=U(n)$ then, additionally, $\xi_{J\theta^{\ast}}=0$, where $J$ is an almost complex structure).
\item $\sum_j g(\xi_Xe_j,\xi_Ye_j)=\beta g(X,Y)$ for some function $\beta$ depending on $|\theta^{\sharp}|^2$ and for $X,Y$ orthogonal to $\theta^{\sharp}$ (and to $J\theta^{\sharp}$ for $G=U(n)$). Thus $\tilde{g}$ is a warped product with respect to certain distribution on $M$.
\end{itemize}

\subsection{$\mathcal{W}_4$ structures}

Let $(M,g,J)$ be a hermitian manifold, i.e., $J$ is a complex structure, $J^2=-{\rm id}_{TM}$, $J$ is $g$--invariant,
\begin{equation*}
g(JX,JY)=g(X,Y),\quad X,Y\in TM.
\end{equation*} 
Assume moreover that $M$ is locally conformally K\"ahler (LcK) \cite{va,mo}. Then, there exists one--form $\theta$, called the Lee form, such that
\begin{equation*}
d\Omega=\theta\wedge\Omega,
\end{equation*}
where $\Omega$ is a K\"ahler form, $\Omega(X,Y)=g(X,JY)$, $X,Y\in TM$. Moreover,
\begin{equation}\label{eq:nablaJ}
(\nabla_X J)Y=\frac{1}{2}\left( \theta(JY)X-\theta(Y)JX-g(X,JY)\theta^{\sharp}+g(X,Y)J\theta^{\sharp} \right).
\end{equation} 
Structure $(M,g,J)$ induces the subbundle $U(M)$ of oriented orthonormal frame bundle $SO(M)$ with the structure group $G=U(n)$, $n=\dim M$. On the level of Lie algebras we have the following splitting 
\begin{equation*}
\alg{so}(2n)=\alg{u}(n)\oplus\alg{u}(n)^{\bot},
\end{equation*}
where $\alg{u}(n)^{\bot}$ is an orthogonal complement of $\alg{u}(n)$ with respect to the Killing form on $\alg{so}(2n)$. With the identification $\alg{so}(2n)=\Lambda^2 (\mathbb{R}^{2n})^{\ast}$ we have the following descriptions
\begin{align*}
\algg &=\alg{u}(n)=\{\sigma\in \Lambda^2(\mathbb{R})^{\ast}\mid \sigma(Jv,Jw)=\sigma(v,w)\},\\
\alg{m} &=\alg{u}(n)^{\bot}=\{\sigma\in \Lambda^2(\mathbb{R})^{\ast}\mid \sigma(Jv,Jw)=-\sigma(v,w)\}.
\end{align*}
Notice that the projection ${\rm pr}_{\alg{m}}:\alg{so}(2n)\to\alg{m}$ respecting above decomposition is given by
\begin{equation*}
{\rm pr}_{\alg{m}}(A)=\frac{1}{2}\left(A+JAJ\right).
\end{equation*} 
Thus the intrinsic torsion $\xi_X$ is given by the formula \cite{ffs}
\begin{equation*}
\xi_X=-\frac{1}{2}J(\nabla_XJ).
\end{equation*}
which, by \eqref{eq:nablaJ} implies
\begin{equation*}
\xi_XY=-\frac{1}{4}\left(\theta(Y)X +\theta(JY)JX-g(X,Y)\theta^{\sharp}-g(X,JY)J\theta^{\sharp}\right).
\end{equation*}

We will compute the condition of minimality of a $G$--structure induced by LcK manifold. First of all, let us derive the formula for the Riemannian metric $\tilde{g}$. Denote by $(e_j)$ any $g$--orthonormal basis. We have
\begin{align*}
\tilde{g}(X,Y) &=g(X,Y)+\sum_j g(\xi_Xe_j,\xi_Ye_j)\\
&=g(X,Y)+\frac{1}{4}\left( g(X,Y)|\theta^{\sharp}|^2-\theta(X)\theta(Y)-\theta(JX)\theta(JY)\right)\\
&=\left(1+\frac{1}{4}|\theta^{\sharp}|^2\right)g(X,Y)
-\frac{1}{4}\left( \theta(X)\theta(Y)+\theta(JX)\theta(JY) \right).
\end{align*} 
Notice that if $X$ is orthogonal to the $J$--invariant distribution $\mathcal{D}$ spanned by the vector fields $\theta^{\sharp}$, $J\theta^{\sharp}$, then $\tilde{g}(X,Y)=\left(1+\frac{1}{4}|\theta^{\sharp}|^2\right)g(X,Y)$, whereas, if $X\in\mathcal{D}$, then $\tilde{g}(X,Y)=g(X,Y)$. Therefore $\tilde{g}$ is a warped metric with respect to the decomposition $TM=\mathcal{D}^{\bot}\oplus\mathcal{D}$. 
Thus a $\tilde{g}$--orthonormal basis $(\tilde{e}_j)$ related to $(e_j)$, where we assume $e_{2n-1}=\frac{1}{|\theta^{\sharp}|}\theta^{\sharp}$ and $e_{2n}=\frac{1}{|\theta^{\sharp}|}J\theta^{\sharp}$ is of the form
\begin{align*}
\tilde{e}_1=\frac{1}{\sqrt{1+\frac{1}{4}|\theta^{\sharp}|^2}}e_1,\ldots,\tilde{e}_{2n-2}=\frac{1}{\sqrt{1+\frac{1}{4}|\theta^{\sharp}|^2}}e_{2n-2},\tilde{e}_{2n-1}=e_{2n-1},\tilde{e}_{2n}=e_{2n}.
\end{align*}
Further, for any $X\in TM$,
\begin{equation*}
R_{\xi_X}(X)=\sum_j R(e_j,\xi_Xe_j)X=-\frac{1}{2}\left( R(\theta^{\sharp},X)X-R(J\theta^{\sharp},JX)X) \right).
\end{equation*}
Thus
\begin{equation}\label{eq:Rxi}
\sum_j R_{\xi_{\tilde{e}_j}}(\tilde{e}_j)=-\frac{1}{2}\frac{1}{1+\frac{1}{4}|\theta^{\sharp}|^2}
\left( {\rm Ric}(\theta^{\sharp})-{\rm Ric}^{\ast}(J\theta^{\sharp}) \right),
\end{equation}
where ${\rm Ric}$ is the Ricci operator and ${\rm Ric}^{\ast}$ is the $\ast$--Ricci operator defined by
\begin{equation*}
{\rm Ric}^{\ast}(X)=\sum_j R(X,Je_j)e_j,\quad X\in TM.
\end{equation*}
Before computing the remaining part of minimality condition, let us introduce one useful notion. For a vector $X\in TM$ put
\begin{equation*}
X'=\sum_j g(X,\tilde{e}_j)\tilde{e}_j.
\end{equation*}
Let us collect properties of the assignment $X\mapsto X'$ in the Proposition below.
\begin{prop}\label{prop:X'}
The following conditions hold:
\begin{enumerate}
\item $X'=\frac{1}{1+\frac{1}{4}|\theta^{\sharp}|^2}X$ if $X\in\mathcal{D}^{\bot}$,
\item $X'=X$ if $X\in\mathcal{D}$,
\item in general, $X'=\frac{1}{1+\frac{1}{4}|\theta^{\sharp}|^2}\left( X+\frac{1}{4}\left( g(X,\theta^{\sharp})\theta^{\sharp}+ g(X,J\theta^{\sharp})J\theta^{\sharp} \right) \right)$, $X\in TM$,
\item $(JX)'=JX'$ and $\theta(X')=\theta(X)$ for any $X\in TM$.
\item $g(X',Y)=g(X,Y')$ for any $X,Y\in TM$.
\end{enumerate}
\end{prop}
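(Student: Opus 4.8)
The plan is to establish each of the five items by direct computation from the definition $X'=\sum_j g(X,\tilde{e}_j)\tilde{e}_j$, using the explicit description of the $\tilde{g}$--orthonormal frame $(\tilde{e}_j)$ displayed just above the statement. The key structural fact I would exploit throughout is the $\tilde{g}$--orthogonal splitting $TM=\mathcal{D}^{\bot}\oplus\mathcal{D}$ into the warped and unwarped parts, with $\mathcal{D}=\mathrm{span}(\theta^{\sharp},J\theta^{\sharp})$.

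First I would prove (1): if $X\in\mathcal{D}^{\bot}$, then only the vectors $\tilde{e}_1,\dots,\tilde{e}_{2n-2}$ contribute, and each of these equals $\frac{1}{\sqrt{1+\frac14|\theta^{\sharp}|^2}}e_j$, so $X'=\sum_{j\le 2n-2}\frac{1}{1+\frac14|\theta^{\sharp}|^2}g(X,e_j)e_j=\frac{1}{1+\frac14|\theta^{\sharp}|^2}X$, the last equality because $(e_j)_{j\le 2n-2}$ is a $g$--orthonormal frame of $\mathcal{D}^{\bot}$. For (2), if $X\in\mathcal{D}$ only $\tilde{e}_{2n-1}=e_{2n-1}$ and $\tilde{e}_{2n}=e_{2n}$ contribute, giving $X'=g(X,e_{2n-1})e_{2n-1}+g(X,e_{2n})e_{2n}=X$. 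Item (3) then follows by writing an arbitrary $X$ as its $\mathcal{D}$--component $\frac{1}{|\theta^{\sharp}|^2}\big(g(X,\theta^{\sharp})\theta^{\sharp}+g(X,J\theta^{\sharp})J\theta^{\sharp}\big)$ plus its $\mathcal{D}^{\bot}$--component, applying (1) and (2), and simplifying; alternatively one can verify (3) directly from the frame formula and then read off (1) and (2) as special cases.

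Items (4) and (5) I would deduce from (3) rather than recompute. For (4): since $\mathcal{D}$ is $J$--invariant, $J$ preserves both $\mathcal{D}$ and $\mathcal{D}^{\bot}$, and the scalar $\frac{1}{1+\frac14|\theta^{\sharp}|^2}$ together with the bracketed expression in (3) are manifestly $J$--equivariant (using $g(JX,\theta^{\sharp})=-g(X,J\theta^{\sharp})$ and $g(JX,J\theta^{\sharp})=g(X,\theta^{\sharp})$), so $(JX)'=JX'$; and $\theta(X')=g(\theta^{\sharp},X')$, where the $\mathcal{D}^{\bot}$--part of $X$ contributes nothing and the $\mathcal{D}$--part is fixed by the prime, so $\theta(X')=\theta(X)$. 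For (5): by (3) the map $X\mapsto X'$ is given by $\frac{1}{1+\frac14|\theta^{\sharp}|^2}(\mathrm{id}+\frac14 P_{\mathcal{D}}\,|\theta^{\sharp}|^2)$ up to rescaling—more cleanly, $X\mapsto X'$ is $g$--self-adjoint because it is a polynomial in the $g$--self-adjoint orthogonal projections onto $\mathcal{D}$ and $\mathcal{D}^{\bot}$ with scalar coefficients (indeed $X'=\frac{1}{1+\frac14|\theta^{\sharp}|^2}P_{\mathcal{D}^{\bot}}X+P_{\mathcal{D}}X$), hence $g(X',Y)=g(X,Y')$.

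I do not anticipate a genuine obstacle here; the statement is essentially bookkeeping. The only mild subtlety is that the displayed frame and all the formulas implicitly assume $\theta^{\sharp}\neq 0$ on an open set, so strictly one works on the open set where $\theta$ does not vanish (on the zero set $\mathcal{D}=0$, $\tilde{g}=g$, and all five identities are trivial). I would flag this briefly and otherwise present the computation as above, doing (1)–(3) explicitly and treating (4)–(5) as immediate consequences of the closed form in (3).
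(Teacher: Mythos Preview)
Your proposal is correct. The paper states this proposition without proof, treating it as a routine verification, and your direct computation from the explicit $\tilde g$--orthonormal frame---first handling the $\mathcal{D}^{\bot}$ and $\mathcal{D}$ cases separately, then combining them for (3), and reading off (4)--(5) as formal consequences---is precisely the intended argument.
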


After lengthy computations we get
\begin{align*}
\sum_j g((\nabla_{\tilde{e}_j}\xi)_{\tilde{e}_j}Y,Z)=-\frac{1}{4}
(& (\nabla_{Z'}\theta)Y-(\nabla_{Y'}\theta)Z-(\nabla_{JZ'}\theta)JY+(\nabla_{JY'}\theta)JZ \\
&-\theta((\nabla_{JZ'}J)Y)+\theta((\nabla_{JY'}J)Z)\\
&+\theta(JY)g({\rm div}'J,Z)-\theta(JZ)g({\rm div}'J,Y) ),
\end{align*} 
where the divergence ${\rm div}'J$ equals ${\rm div}'J=\sum_j (\nabla_{\tilde{e}_j}J)\tilde{e}_j$. By \eqref{eq:nablaJ} and Proposition \ref{prop:X'} the expression
\begin{equation*}
2\theta(\nabla_{JX'}J)Y)=\theta(JX)\theta(JY)-\theta(X)\theta(Y)-g(X',Y)|\theta^{\sharp}|^2
\end{equation*}
is symmetric with respect to $X$ and $Y$. Moreover,
\begin{align*}
2{\rm div}'J &=\sum_j \left( \theta(J\tilde{e}_j)\tilde{e}_j-\theta(\tilde{e}_j)J\tilde{e}_j+|\tilde{e}_j|^2J\theta^{\sharp} \right) \\
&=-J\theta^{\sharp}-J\theta^{\sharp}+
\left(\sum_j|\tilde{e}_j|^2\right)J\theta^{\sharp}\\
&=\frac{2n-2}{1+\frac{1}{4}|\theta^{\sharp}|^2}J\theta^{\sharp}.
\end{align*}
Thus the bilinear map $(X,Y)\mapsto \theta(JX)g({\rm div}'J,Y)$ is also symmetric. Additionally, by \eqref{eq:Rxi},
\begin{align*}
\sum_j g(\xi_{R_{\xi_{\tilde{e}_j}}(\tilde{e}_j)}Y,Z)=\frac{1}{8}\frac{1}{1+\frac{1}{4}|\theta^{\sharp}|^2}( &
\theta(Y)g(\mathcal{R},Z)-\theta(Z)g(\mathcal{R},Y) \\
&-\theta(JY)\Omega(\mathcal{R},Z)+\theta(JZ)\Omega(\mathcal{R},Y)),
\end{align*}
where, to simplify notation, we put
\begin{equation*}
\mathcal{R}={\rm Ric}(\theta^{\sharp})-{\rm Ric}^{\ast}(J\theta^{\sharp}).
\end{equation*}
Concluding, by Proposition \ref{prop:minimalGstructure}, locally conformally K\"ahler structure $(M,g,J)$ with the Lee form $\theta$ is a minimal $U(n)$--structure if and only if the following condition holds
\begin{equation}\label{eq:LcKminimal}
\begin{split}
0= &(\nabla_{Z'}\theta)Y-(\nabla_{Y'}\theta)Z-(\nabla_{JZ'}\theta)JY+(\nabla_{JY'}\theta)JZ \\
&-\frac{1}{2}\frac{1}{1+\frac{1}{4}|\theta^{\sharp}|^2}(
\theta(Y)g(\mathcal{R},Z)-\theta(Z)g(\mathcal{R},Y) \\
&-\theta(JY)\Omega(\mathcal{R},Z)+\theta(JZ)\Omega(\mathcal{R},Y))
\end{split}
\end{equation}
for all $Y,Z\in TM$.  

Let us simplify condition \eqref{eq:LcKminimal}. To check validity of condition \eqref{eq:LcKminimal}, we may restrict to certain vectors $Y,Z$. Indeed, since the right hand side is skew--symmetric with respect to $Y$ and $Z$, by linearity, we have the following four possibilities:
\begin{align*}
&(i)\quad Y,Z\in\mathcal{D}^{\bot}, 
&&(ii)\quad  Y\in\mathcal{D}^{\bot}, Z=\theta^{\sharp},\\
&(iii)\quad  Y\in\mathcal{D}^{\bot}, Z=J\theta^{\sharp},
&&(iv)\quad  Y=\theta^{\sharp}, Z=J\theta^{\sharp}.
\end{align*}
In the case (iv) \eqref{eq:LcKminimal} is trivially satisfied. In the case (i) the considered condition simplifies to
\begin{equation}\label{eq:LcKminimal(4)}
0=\theta([Y,Z])-\theta([JY,JZ]),\quad Y,Z\in\mathcal{D}^{\bot}.
\end{equation}
Finally, the cases (ii) and (iii) lead to the same condition
\begin{multline}\label{eq:LcKminimal(2)}
(1+\frac{1}{4}|\theta^{\sharp}|^2)\theta(\nabla_{J\theta^{\sharp}}JY-\nabla_{\theta^{\sharp}}Y)\\
=\frac{1}{2}Y|\theta^{\sharp}|^2+\theta(\nabla_{JY}J\theta^{\sharp})-\frac{1}{2}|\theta^{\sharp}|^2g(\mathcal{R},Y),\quad Y\in \mathcal{D}^{\bot},
\end{multline}
where we used the fact that $\theta(\nabla_Y\theta^{\sharp})=\frac{1}{2}Y|\theta^{\sharp}|^2$. Thus we have proved the following result. 
\begin{thm}\label{thm:LcKminimal}
A LcK manifold $(M,g,J)$ is minimal as a $U(n)$--structure if and only if \eqref{eq:LcKminimal(4)} and \eqref{eq:LcKminimal(2)} hold.
\end{thm}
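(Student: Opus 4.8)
The plan is to specialize the general minimality criterion of Proposition~\ref{prop:minimalGstructure} to the LcK setting, using the explicit description of the intrinsic torsion $\xi_XY=-\frac14\bigl(\theta(Y)X+\theta(JY)JX-g(X,Y)\theta^{\sharp}-g(X,JY)J\theta^{\sharp}\bigr)$ obtained from \eqref{eq:nablaJ}. Concretely, I must evaluate
\[
\sum_j\Bigl((\nabla_{\tilde e_j}\xi)_{\tilde e_j}+\xi_{R_{\xi_{\tilde e_j}}(\tilde e_j)}\Bigr)=0
\]
for a $\tilde g$-orthonormal frame, $\tilde g$ being the metric \eqref{eq:tildeg}. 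The first step is to identify $\tilde g$: since $\xi_{\theta^{\sharp}}=\xi_{J\theta^{\sharp}}=0$ and $\sum_j g(\xi_Xe_j,\xi_Ye_j)$ reduces to a multiple of $g(X,Y)$ on $\mathcal D^{\bot}$, the metric $\tilde g$ is the warped product that rescales $g$ by the factor $1+\frac{1}{4}|\theta^{\sharp}|^2$ on $\mathcal D^{\bot}$ and leaves it unchanged on $\mathcal D=\mathrm{span}(\theta^{\sharp},J\theta^{\sharp})$. This fixes an adapted $\tilde g$-orthonormal frame $(\tilde e_j)$ in terms of a $g$-orthonormal one with $e_{2n-1}=|\theta^{\sharp}|^{-1}\theta^{\sharp}$, $e_{2n}=|\theta^{\sharp}|^{-1}J\theta^{\sharp}$.

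Next I would handle the curvature term: substituting $\xi$ into $R_{\xi_X}(X)=\sum_j R(e_j,\xi_Xe_j)X$ and summing over the adapted frame produces \eqref{eq:Rxi}, essentially ${\rm Ric}(\theta^{\sharp})-{\rm Ric}^{\ast}(J\theta^{\sharp})$ up to the warping factor; feeding this back through $\xi$ gives the algebraic part of the minimality condition. Then comes the main body of work: expanding $\sum_j g\bigl((\nabla_{\tilde e_j}\xi)_{\tilde e_j}Y,Z\bigr)$. Differentiating the formula for $\xi$ brings in $\nabla^2\theta$ (terms $(\nabla_{\,\cdot\,}\theta)(\,\cdot\,)$), together with $\nabla J$ (again controlled by \eqref{eq:nablaJ}) and the twisted divergence ${\rm div}'J=\sum_j(\nabla_{\tilde e_j}J)\tilde e_j$. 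To keep the bookkeeping manageable I would introduce the operator $X\mapsto X'=\sum_j g(X,\tilde e_j)\tilde e_j$ encoding the warping, record its elementary properties (Proposition~\ref{prop:X'}), and compute ${\rm div}'J=\frac{2n-2}{1+\frac{1}{4}|\theta^{\sharp}|^2}J\theta^{\sharp}$ directly from \eqref{eq:nablaJ}.

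The decisive observation is that the minimality equation is skew-symmetric in $(Y,Z)$, so any term of the expansion that is symmetric in $Y$ and $Z$ drops out. Using \eqref{eq:nablaJ} and Proposition~\ref{prop:X'} one checks that $\theta\bigl((\nabla_{JX'}J)Y\bigr)$ and $\theta(JX)\,g({\rm div}'J,Y)$ are both symmetric, which kills the $\nabla J$ contributions and leaves precisely \eqref{eq:LcKminimal}. Finally, since the right-hand side of \eqref{eq:LcKminimal} is skew in $(Y,Z)$ and both sides are tensorial, it suffices to test on the four types of pairs drawn from $\mathcal D^{\bot}$ and from $\{\theta^{\sharp},J\theta^{\sharp}\}$: case (iv) is vacuous; case (i) collapses — rewriting covariant derivatives as Lie brackets via $(\nabla_{Z}\theta)Y-(\nabla_{Y}\theta)Z=\theta([Y,Z])$ — to \eqref{eq:LcKminimal(4)}; and cases (ii) and (iii) both give \eqref{eq:LcKminimal(2)}, using $\theta(\nabla_Y\theta^{\sharp})=\frac12 Y|\theta^{\sharp}|^2$. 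Assembling these equivalences proves the theorem. The hard part is the expansion of $\sum_j g\bigl((\nabla_{\tilde e_j}\xi)_{\tilde e_j}Y,Z\bigr)$ together with correctly propagating the warping factor through each covariant derivative; the symmetry cancellations and the closing case analysis are then comparatively routine.
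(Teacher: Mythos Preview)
Your proposal is correct and follows essentially the same route as the paper: the paper's proof \emph{is} the computation preceding the theorem statement, and you have reproduced its structure step for step --- the identification of $\tilde g$ as a warped product on $\mathcal D^{\bot}\oplus\mathcal D$, the curvature term via \eqref{eq:Rxi}, the expansion of $\sum_j g((\nabla_{\tilde e_j}\xi)_{\tilde e_j}Y,Z)$ through the operator $X\mapsto X'$, the symmetry cancellations of the $\nabla J$ and ${\rm div}'J$ terms, and the final reduction by the four-case analysis to \eqref{eq:LcKminimal(4)} and \eqref{eq:LcKminimal(2)}. One small remark: the identity $(\nabla_{Z}\theta)Y-(\nabla_{Y}\theta)Z=\theta([Y,Z])$ you invoke in case (i) is not general but holds here because $\theta(Y)=\theta(Z)=0$ for $Y,Z\in\mathcal D^{\bot}$; you should make that dependence explicit.
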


\begin{exa}
Let $(M,g)$ be the Euclidean space $\mathbb{R}^{2n}$ with the canonical complex structure $J$. Let $f$ be arbitrary smooth function on $M$ and consider the conformal deformation $g_0=e^{-2f}g$. We will compute the condition of minimality of $(M,g_0,J)$. In this case we have globally conformally K\"ahler manifold. Recall that the curvature tensor $R_0$ is given by the formula
\begin{align*}
g_0(R_0(X,Y)Z,W)= & L(X,Z)g_0(Y,W)+L(Y,W)g_0(X,Z)\\
&-L(X,W)g_0(Y,Z)-L(Y,Z)g_0(X,W)\\
&-e^{4f}|df|_0^2(g_0(X,Z)g_0(Y,W )-g_0(Y,Z)g_0(X,W)),
\end{align*}
where $L(X,Y)=(\nabla_Xdf)Y+df(X)df(Y)$ and hessian is computed with respect to the Levi--Civita connection of the Euclidean metric $g$ \cite{vl}. Moreover, $\theta=df$ is the Lee form. Then, simple calculations leed to the equality
\begin{equation*}
g_0(\mathcal{R},Y)=e^{-2f}\left((3-2n)L(\theta^{\sharp},Y)-L(J\theta^{\sharp},JY)\right),\quad Y\in \mathcal{D}^{\bot}. 
\end{equation*}
Since $\theta=df$, then $d\theta=0$ and condition \eqref{eq:LcKminimal(4)} holds trivially. Moreover $d\theta=0$ implies the following relations
\begin{equation*}
\theta(\nabla_{\theta^{\sharp}}Y)=-\frac{1}{2}Y|\theta^{\sharp}|_0^2\quad\textrm{and}\quad
\theta(\nabla_{J\theta^{\sharp}}Y)=\theta(\nabla_YJ\theta^{\sharp})\quad\textrm{for $Y\in\mathcal{D}^{\bot}$}.
\end{equation*}
Consequently, \eqref{eq:LcKminimal(2)} reduces to
\begin{multline}\label{eq:LcKexample}
\theta(\nabla_{JY}J\theta^{\sharp})+\frac{1}{2}Y|\theta^{\sharp}|^2_0\\
=(1+\frac{1}{4}|\theta^{\sharp}|^2_0)e^{-2f}\left( (3-2n)L(\theta^{\sharp},Y)+L(J\theta^{\sharp},JY) \right)
,\quad Y\in\mathcal{D}^{\bot}.
\end{multline} 
By the fact that $L(\theta^{\sharp},Y)=\frac{1}{2}Y|\theta^{\sharp}|^2$ and $L(J\theta^{\sharp},JY)=-\theta(\nabla_{JY}J\theta^{\sharp})$ for $Y\in\mathcal{D}^{\bot}$, condition \eqref{eq:LcKexample} takes the form
\begin{equation*}
(1+e^{-2f}(1+\frac{1}{4}|\theta^{\sharp}|^2_0)\theta(\nabla_{JY}J\theta^{\sharp})=((3-2n)e^{-2f}(1+\frac{1}{4}|\theta^{\sharp}|^2_0-1)Y|\theta^{\sharp}|^2,\quad Y\in\mathcal{D}^{\bot}.
\end{equation*} 
The above condition is has a solution for some $f$. For example, we may choose $f=(\varphi(x_1),0,\ldots,0)$, where $\varphi$ is some smooth non--vanishing function. Hence there is globally conformally K\"ahler manifold, which is minimal as a $U(n)$--structure and for which the Lee form is not parallel. 
\end{exa}

\begin{thm}\label{thm:Vaisman}
Assume $(M,g,J)$ is locally conformally K\"ahler manifold with parallel Lee form $\theta$. If the $U(n)$--structure induced by $M$ is a harmonic map, then it is a minimal $U(n)$--structure. In particular, Hopf manifolds induce minimal $U(n)$--structures.
\end{thm}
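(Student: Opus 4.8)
The plan is to exploit the special structure that emerges when the Lee form $\theta$ is parallel, $\nabla\theta=0$. First I would record the immediate consequences: $|\theta^\sharp|^2$ is constant, so all the warping factors $1+\tfrac14|\theta^\sharp|^2$ are constant, and $\theta^\sharp$ is a Killing field (being parallel). Parallelism also gives $\nabla_Y\theta^\sharp=0$ for every $Y$, which kills the left-hand side and the $\tfrac12 Y|\theta^\sharp|^2$ term in \eqref{eq:LcKminimal(2)}; it also forces $\theta([Y,Z])=-2(\nabla_Y\theta)Z/(\text{stuff})=0$ type identities, so that \eqref{eq:LcKminimal(4)} becomes $\theta([JY,JZ])=0$ for $Y,Z\in\mathcal D^\bot$.

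Next I would compare the minimality conditions \eqref{eq:LcKminimal(4)}, \eqref{eq:LcKminimal(2)} with the harmonic-map condition from Proposition~\ref{prop:harmonicGstructure}. Being a harmonic map means the $G$-structure is a harmonic $G$-structure, i.e. \eqref{eq:harmonicGstructure} holds, \emph{and} $\sum_j R_{\xi_{e_j}}(e_j)=0$. The key observation is that when $\theta$ is parallel, the metric $\tilde g$ differs from $g$ only by a constant rescaling on $\mathcal D^\bot$ (a constant conformal factor on a parallel distribution), so $\tilde g$ is (up to scaling on each factor of the $\nabla$-parallel splitting $TM=\mathcal D^\bot\oplus\mathcal D$) essentially a product-type modification with constant coefficients. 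Consequently $\tilde\nabla=\nabla$, i.e. the tensor $S$ of Remark~\ref{rem:harm}(1) vanishes, and the harmonic-map condition for $\sigma_P:(M,\tilde g)\to N$ collapses to the \emph{same} two equations $\sum_j(\nabla_{\tilde e_j}\xi)_{\tilde e_j}=0$ and $\sum_j R_{\xi_{\tilde e_j}}(\tilde e_j)=0$ that one gets for $(M,g)\to N$ after absorbing the constant factors. Hence "harmonic map with respect to $g$" implies "harmonic map with respect to $\tilde g$", which by Proposition~\ref{prop:minimalGstructure} is exactly minimality. I would present this as: parallelism of $\theta$ $\Rightarrow$ $S=0$ $\Rightarrow$ the $\tilde g$-harmonic-map equations coincide with the $g$-harmonic-map equations $\Rightarrow$ minimality.

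For the Hopf manifold case I would invoke the known facts: a Hopf manifold $(\mathbb C^n\setminus\{0\})/\Gamma$ carries a Vaisman metric, i.e. an LcK metric whose Lee form is parallel and of constant length (it is in fact nowhere zero), and it is well known (e.g. from \cite{kn} or direct computation with \eqref{eq:nablaJ}) that Vaisman Hopf manifolds induce harmonic $U(n)$-structures and that the curvature term $\sum_j R_{\xi_{e_j}}(e_j)$ vanishes there because the relevant Ricci and $\ast$-Ricci contributions along $\theta^\sharp$ and $J\theta^\sharp$ cancel (the Lee and anti-Lee directions span a flat, totally geodesic factor). Thus Hopf manifolds are harmonic maps as $U(n)$-structures, and the first part of the theorem then yields minimality.

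The main obstacle I expect is verifying cleanly that $\nabla\theta=0$ forces $S=0$ so that the $\tilde g$- and $g$-harmonic-map systems genuinely coincide — one must check that $\tilde g$ is not merely a warped product but actually has $\nabla$ as its Levi--Civita connection, which uses that the conformal factor on $\mathcal D^\bot$ is \emph{constant} and that $\mathcal D$, $\mathcal D^\bot$ are $\nabla$-parallel (the latter following from $\nabla\theta^\sharp=0$ and $(JX)'=JX'$ in Proposition~\ref{prop:X'}). A secondary point is confirming that the two surviving scalar curvature equations really are invariant under the constant rescaling of $\mathcal D^\bot$, i.e. that no stray factor of $1+\tfrac14|\theta^\sharp|^2$ spoils the equivalence; since that factor is constant it can be divided out of \eqref{eq:LcKminimal(2)} and the $R_{\xi}$ term, but this bookkeeping should be spelled out.
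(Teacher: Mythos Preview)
Your strategy hinges on the claim that $S=\tilde\nabla-\nabla=0$, equivalently that the splitting $TM=\mathcal D\oplus\mathcal D^\bot$ is $\nabla$--parallel. This is false. Although $\theta^\sharp$ is parallel, $J\theta^\sharp$ is not: from \eqref{eq:nablaJ} with $Y=\theta^\sharp$ one gets, for $X\in\mathcal D^\bot$,
\[
\nabla_X(J\theta^\sharp)=(\nabla_XJ)\theta^\sharp=-\tfrac12|\theta^\sharp|^2\,JX\ \in\ \mathcal D^\bot,
\]
which is nonzero whenever $\dim M>2$ and $\theta\neq 0$. Hence $\mathcal D$ is not $\nabla$--invariant, $\nabla\tilde g\neq 0$ (the term $(\theta\circ J)\otimes(\theta\circ J)$ is not parallel), and $S\neq 0$. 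The appeal to $(JX)'=JX'$ from Proposition~\ref{prop:X'} does not help: that is an algebraic identity about the operator $X\mapsto X'$, not a statement about parallelism of $\mathcal D$. So the obstacle you flagged does not resolve the way you hoped.

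The paper bypasses $S$ entirely. Since $|\theta^\sharp|$ is constant, $c:=\bigl(1+\tfrac14|\theta^\sharp|^2\bigr)^{-1}$ is constant, and with $e_{2n-1}=|\theta^\sharp|^{-1}\theta^\sharp$, $e_{2n}=|\theta^\sharp|^{-1}J\theta^\sharp$ one has $\tilde e_j=\sqrt{c}\,e_j$ for $j\le 2n-2$ and $\tilde e_j=e_j$ for $j=2n-1,2n$. Therefore
\[
\sum_j(\nabla_{\tilde e_j}\xi)_{\tilde e_j}
= c\sum_j(\nabla_{e_j}\xi)_{e_j}
+\frac{1-c}{|\theta^\sharp|^2}\Bigl((\nabla_{\theta^\sharp}\xi)_{\theta^\sharp}+(\nabla_{J\theta^\sharp}\xi)_{J\theta^\sharp}\Bigr).
\]
The bracket vanishes because $\xi_{\theta^\sharp}=\xi_{J\theta^\sharp}=0$ and $\nabla_{\theta^\sharp}\theta^\sharp=\nabla_{J\theta^\sharp}J\theta^\sharp=0$; the same rescaling gives $\sum_j R_{\xi_{\tilde e_j}}(\tilde e_j)=c\sum_j R_{\xi_{e_j}}(e_j)$ from \eqref{eq:Rxi}. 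Thus both $\tilde g$--sums are constant multiples of the corresponding $g$--sums, and the harmonic--map hypothesis (Proposition~\ref{prop:harmonicGstructure}) forces each to vanish, yielding \eqref{eq:minimalGstructure}. The key input is not $S=0$ but rather $\xi|_{\mathcal D}=0$ together with the \emph{constancy} of $c$; this is what you should use to make the argument go through. The Hopf case then follows by citing \cite{gmc1} for the harmonic--map property, as you suggested.
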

\begin{proof}
Assume $\theta^{\sharp}$ is parallel. Notice that by \eqref{eq:nablaJ}
\begin{equation*}
\nabla_XJ\theta^{\sharp}=(\nabla_X J)\theta^{\sharp}=\left\{\begin{array}{rcl}
-\frac{1}{2}|\theta^{\sharp}|^2JX & \textrm{for} & X\in\mathcal{D}^{\bot}\\
0 & \textrm{for} & X\in\mathcal{D}
\end{array}
\right.
\end{equation*}  
and $\xi_{\theta^{\sharp}}=\xi_{J\theta^{\sharp}}=0$. Put, for simplicity, $c=\frac{1}{1+\frac{1}{4}|\theta^{\sharp}|^2}$. Then
\begin{equation*}
\sum_j (\nabla_{\tilde{e}_j}\xi)_{\tilde{e}_j}=c\sum_j (\nabla_{e_j}\xi)_{e_j}+\frac{1-c}{|\theta^{\sharp}|^2}\left( (\nabla_{\theta^{\sharp}}\xi)_{\theta^{\sharp}}+(\nabla_{J\theta^{\sharp}}\xi)_{J\theta^{\sharp}} \right)
=c\sum_j (\nabla_{e_j}\xi)_{e_j}.
\end{equation*}
Moreover, by \eqref{eq:Rxi}, we have
\begin{equation*}
\sum_j R_{\xi_{\tilde{e}_j}}(\tilde{e}_j)=c\sum_j R_{\xi_{e_j}}(e_j).
\end{equation*}
Assuming $M$ that induces a $U(n)$--structure, which is a harmonic map, then (see Proposition \ref{prop:harmonicGstructure}) $\sum_j (\nabla_{e_j}\xi)_{e_j}=0$ and $\sum_j R_{\xi_{e_j}}(e_j)$. Thus, by above considerations, $\sum_j (\nabla_{\tilde{e}_j}\xi)_{\tilde{e}_j}=0$ and $\sum_j R_{\xi_{\tilde{e}_j}}(\tilde{e}_j)=0$. In particular, by Proposition \ref{prop:minimalGstructure}, $M$ induces minimal $U(n)$--structure. Thus we have proved the first part of the theorem. The second part follows by the fact that Hopf manifolds are examples of LcK manifolds, which are harmonic structures \cite{gmc1}.
\end{proof}

\subsection{$\mathcal{C}_4$ and $\mathcal{C}_4$ structures}

Let $(M,g,\varphi,\eta,\zeta)$ be an almost contact metric structure (of dimension $2n+1$), i.e., the Riemannian metric $g$, endomorphism $\varphi:TM\to TM$, one--form $\eta$ and a vector field $\zeta$ satisfy the following conditions
\begin{align*}
&\varphi^2=-{\rm Id}_{TM}+\eta\otimes\zeta, && \eta=\zeta^{\flat},\\
& g(\varphi X,\varphi Y)=g(X,Y)-\eta(X)\eta(Y) && |\zeta|^2=1.
\end{align*}
Then $\varphi$ defines almost complex structure on the distribution $\mathcal{E}$ orthogonal to unit vector field $\zeta$. We call $\zeta$ the Reeb field. Such conditions imply reduction of the structure group of the oriented orthonormal frame bundle to $G=U(n)\times 1\subset SO(2n+1)$. On the level of Lie algebras we have
\begin{equation*}
\alg{so}(2n+1)=\algg\oplus\alg{m},
\end{equation*} 
where $\algg$ is isomorphic to $\alg{u}(n)$ and its orthogonal complement $\alg{u}(n)^{\bot}$ equals $\alg{m}$. By the identification $\alg{so}(2n+1)=\Lambda^2(\mathbb{R}^{2n+1})^{\ast}$ we have
\begin{align*}
\algg &=\{\sigma\mid \sigma(\varphi X,\varphi Y)=\sigma({\rm pr}X,{\rm pr}Y)\},\\
\alg{m}&=\{\sigma\mid \sigma(\varphi X,\varphi Y)=-\sigma({\rm pr}X,{\rm pr}Y)\}\oplus \mathbb{R}\eta\wedge\eta^{\bot},
\end{align*}
where ${\rm pr}X=X-\eta(X)\zeta$ is the orthogonal projection onto $\mathcal{E}$ \cite{gmc2}.
The projection ${\rm pr}_{\alg{m}}:\alg{so}(2n+1)\mapsto\alg{m}$ respecting the above decomposition is given by
\begin{equation*}
{\rm pr}_{\alg{m}}(A)=\frac{1}{2}\left( A+\varphi A\varphi+\eta A\otimes\zeta+\eta\otimes A\zeta \right).
\end{equation*}
Thus the intrinsic torsion equals \cite{gmc2}
\begin{equation}\label{eq:intrinsictorsioncontact}
\xi_XY=\frac{1}{2}(\nabla_X\varphi)\varphi Y+\frac{1}{2}(\nabla_X\eta)Y\,\zeta-\eta(Y)\nabla_X\zeta.
\end{equation}
Recall the following identities \cite{cg} 
\begin{equation}\label{eq:contactidentities}
\begin{split}
(\nabla_X\eta)Y &=g(Y,\nabla_X\zeta)=(\nabla_X\Phi)(\zeta,\varphi Y),\\
(\nabla_X\Phi)(Y,Z) &=g(Y,(\nabla_X\varphi)Z),
\end{split}
\end{equation}
where $\Phi(X,Y)=g(X,\varphi Y)$ is the fundamental $2$--form. Assume that almost contact metric structure $M$ is locally conformally integrable with the Lee form $\theta$. If $\theta=\alpha\eta$ for some smooth function $\alpha$, then the intrinsic torsion belongs to the class $\mathcal{C}_5$ of the space of possible intrinsic torsions, whereas if $\eta(\theta^{\sharp})=0$, then $\xi_X$ is in the class $\mathcal{C}_4$ \cite{cg}. 

We will give example of minimal $U(n)\times 1$--structure for the intrinsic torsion belonging to the class $\mathcal{C}_4$ by comparison with the Grey--Hervella class $\mathcal{W}_4$ and then we will concentrate in details on the class $\mathcal{C}_5$. 

First, notice that if $(M,g,J)$ is an almost hermitian manifold, then $\bar{M}=M\times\mathbb{R}$ becomes almost contact metric structure by putting (see \cite{cg})
\begin{align*}
&\varphi\left(X,a\frac{d}{dt}\right)=(JX,0), && \bar{g}\left(\left(X,a\frac{d}{dt}\right),\left(Y,b\frac{d}{dt}\right)\right)=g(X,Y)+ab,\\
& \zeta=\frac{d}{dt}, && \eta\left(X,a\frac{d}{dt}\right)=a.
\end{align*}
Then $\xi$ and $\bar{\xi}$ are related as follows
\begin{equation*}
\bar{\xi}_{(X,0)}(Y,0)=(\xi_XY,0)
\end{equation*}
and $\bar{\xi}$ vanishes for other possible choices. Thus, $\xi\in\mathcal{W}_4$ if and only if $\bar{\xi}\in\mathcal{C}_4$. Moreover, the Riemannian metric $\tilde{\bar{g}}$ equals $\tilde{g}$ on $M$. By the flatness of $\mathbb{R}$ we get that the $U(n)$--structure induced by such $M$ is minimal if and only if $U(n)\times 1$--structure induced by $\bar{M}$ is minimal. Hence, the product manifold $M\times{R}$, where $M$ is a Hopf manifold is a minimal $U(n)\times 1$--structure. 

Assume now $\xi\in\mathcal{C}_5$. Then $M$ is called $\alpha$--{\it Kenmotsu} and (see \cite{cg})
\begin{equation}\label{eq:alphaKenmotsueq}
(\nabla_X\Phi)(Y,Z)=-\alpha(\Phi(X,Z)\eta(Y)-\Phi(X,Y)\eta(Z)).
\end{equation}
Comparing \eqref{eq:alphaKenmotsueq} and \eqref{eq:intrinsictorsioncontact} we have
\begin{align*}
(\nabla_X\eta)Y &=\alpha(g(X,Y)-\eta(X)\eta(Y)),\\
(\nabla_X\varphi)Y &=-\alpha(\Phi(X,Y)\zeta+\eta(Y)\varphi X),\\
\nabla_X\zeta &=\alpha(X-\eta(X)\zeta)=\alpha{\rm pr}X.
\end{align*}
Hence, using \eqref{eq:contactidentities} we get the formula for the intrinsic torsion
\begin{equation*}
\xi_X Y=\alpha(g(X,Y)\zeta-\eta(Y)X),\quad X,Y\in TM.
\end{equation*}
Then
\begin{equation*}
\xi_{\zeta}=0,\quad \xi_X\zeta=-\alpha{\rm pr}X.
\end{equation*}
By a simple computation we also get
\begin{equation*}
\tilde{g}(X,Y)=g(X,Y)+\sum_j g(\xi_Xe_j,\xi_Ye_j)
=(1+2\alpha^2)g(X,Y)-2\alpha^2\eta(X)\eta(Y),
\end{equation*}
where $(e_j)$ is a $g$--orthonormal basis. Therefore, the associated $\tilde{g}$--orthonormal basis $(\tilde{e}_j)$, where we assume that $e_{2n+1}=\zeta$, is given by
\begin{equation*}
\tilde{e}_1=\frac{1}{\sqrt{1+2\alpha^2}}e_1,\ldots,
\tilde{e}_{2n}=\frac{1}{\sqrt{1+2\alpha^2}}e_{2n},
\tilde{e}_{2n+1}=\zeta.
\end{equation*} 
Analogously as in the hermitian case, for a vector $X\in TM$ put
\begin{equation*}
X'=\sum_j g(X,\tilde{e}_j)\tilde{e}_j.
\end{equation*}
Then $X'=\frac{1}{1+2\alpha^2}X+\frac{2\alpha^2}{1+2\alpha^2}\eta(X)\zeta$, which implies
\begin{equation*}
X'=\frac{1}{1+2\alpha^2}X\quad\textrm{for $X\in\mathcal{E}$}\quad\textrm{and}\quad \zeta'=\zeta.
\end{equation*}
Now we may turn to computing the condition of minimality of $U(n)\times 1$--structure. We have $R_{\xi_X}(X)=2\alpha R(\zeta,X)X$, thus
\begin{equation*}
\sum_j R_{\xi_{\tilde{e}_j}}(\tilde{e}_j)= \frac{2\alpha}{1+2\alpha^2}{\rm Ric}(\zeta).
\end{equation*}
Hence,
\begin{equation*}
\sum_j g(\xi_{R_{\xi_{\tilde{e}_j}}(\tilde{e}_j)}Y,Z)=\frac{2\alpha^2}{1+2\alpha^2}(\eta(Y){\rm Ric}(\zeta,Z)-\eta(Z){\rm Ric}(\zeta,Y)).
\end{equation*}
Moreover,
\begin{align*}
(\nabla_X\xi)_XY &=(X\alpha)(g(X,Y)\zeta-\eta(Y)X)+\alpha(g(X,Y)\nabla_X\zeta-(\nabla_X\eta)Y\cdot X)\\
&=(X\alpha)(g(X,Y)\zeta-\eta(Y)X)+\alpha^2\eta(X)(\eta(Y)X-g(X,Y)\zeta),
\end{align*}
which implies
\begin{equation*}
\sum_j g((\nabla_{\tilde{e}_j}\xi)_{\tilde{e}_j}Y,Z)=\eta(Z)Y'\alpha-\eta(Y)Z'\alpha.
\end{equation*}
Thus we have obtained the following observation. $\alpha$--Kenmotsu manifold is minimal as an $U(n)\times 1$--structure if and only if for any $Y,Z\in TM$ the following condition holds
\begin{equation}\label{eq:Kenmotsuminimal}
0=\eta(Z)Y'\alpha-\eta(Y)Z'\alpha
+\frac{2\alpha^2}{1+2\alpha^2}(\eta(Y){\rm Ric}(\zeta,Z)-\eta(Z){\rm Ric}(\zeta,Y)).
\end{equation}
Let us simplify condition \eqref{eq:Kenmotsuminimal}. For $Y,Z\in\mathcal{E}$ \eqref{eq:Kenmotsuminimal} holds trivially,
whereas for $Y\in\mathcal{E}$ and $Z=\zeta$ we obtain
\begin{equation*}
0=Y\alpha-2\alpha^2{\rm Ric}(\zeta,Y).
\end{equation*}
Concluding we may state the following corollary.
\begin{thm}\label{thm:Kenmotsuminimal}
$\alpha$--Kenmotsu manifold is minimal as an $U(n)\times 1$--structure if and only if
\begin{equation*}
Y\alpha=2\alpha^2{\rm Ric}(\zeta,Y),\quad Y\in\mathcal{E}.
\end{equation*}
\end{thm}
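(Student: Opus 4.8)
The plan is to feed the $\alpha$--Kenmotsu data assembled above directly into the minimality criterion of Proposition \ref{prop:minimalGstructure}: the induced $U(n)\times 1$--structure is minimal exactly when $\sum_j\big((\nabla_{\tilde{e}_j}\xi)_{\tilde{e}_j}+\xi_{R_{\xi_{\tilde{e}_j}}(\tilde{e}_j)}\big)=0$ for the adapted $\tilde{g}$--orthonormal frame constructed above. Pairing this vector identity with an arbitrary $Z\in TM$ via $g$ and inserting the three formulas already derived — the expression for $\sum_j g(\xi_{R_{\xi_{\tilde{e}_j}}(\tilde{e}_j)}Y,Z)$ in terms of ${\rm Ric}(\zeta,\cdot)$, the pointwise formula for $(\nabla_X\xi)_XY$, and the resulting $\sum_j g((\nabla_{\tilde{e}_j}\xi)_{\tilde{e}_j}Y,Z)=\eta(Z)Y'\alpha-\eta(Y)Z'\alpha$ — reduces minimality to the requirement that the bilinear identity \eqref{eq:Kenmotsuminimal} hold for all $Y,Z\in TM$.

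The next step is to observe that the expression in \eqref{eq:Kenmotsuminimal} is bilinear and skew--symmetric in $(Y,Z)$ and that every term carries a factor $\eta(Y)$ or $\eta(Z)$. Splitting $TM=\mathcal{E}\oplus\mathbb{R}\zeta$, it therefore suffices to test \eqref{eq:Kenmotsuminimal} on the three kinds of pairs of adapted basis directions: for $Y,Z\in\mathcal{E}$ all $\eta$--factors vanish and the identity is automatic; for $Y=Z=\zeta$ skew--symmetry forces the right--hand side to vanish; so the only informative case is $Y\in\mathcal{E}$, $Z=\zeta$.

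In that case I would substitute $\eta(Z)=1$, $Z'=\zeta'=\zeta$, $\eta(Y)=0$ and $Y'=\frac{1}{1+2\alpha^2}Y$ (from the formula for $X'$ restricted to $\mathcal{E}$) into \eqref{eq:Kenmotsuminimal}. This collapses it to $\frac{1}{1+2\alpha^2}Y\alpha=\frac{2\alpha^2}{1+2\alpha^2}{\rm Ric}(\zeta,Y)$, and clearing the strictly positive factor $1+2\alpha^2$ yields exactly $Y\alpha=2\alpha^2{\rm Ric}(\zeta,Y)$ for $Y\in\mathcal{E}$. The converse is immediate, since this single scalar equation reinstates \eqref{eq:Kenmotsuminimal} on every pair of basis directions.

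I do not anticipate a genuine obstacle: the analytically heavy pieces — the curvature contraction $\sum_j R_{\xi_{\tilde{e}_j}}(\tilde{e}_j)=\frac{2\alpha}{1+2\alpha^2}{\rm Ric}(\zeta)$ and the covariant derivative $(\nabla_X\xi)_XY$ — are already in hand, so the remaining work is organisational: checking that \eqref{eq:Kenmotsuminimal} is genuinely tensorial (so that verifying it on a frame adapted to $\mathcal{E}\oplus\mathbb{R}\zeta$ is legitimate) and correctly propagating the $X'$ operation through the one nontrivial pairing.
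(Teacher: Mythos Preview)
Your proposal is correct and follows essentially the same route as the paper: both feed the precomputed expressions for $\sum_j g((\nabla_{\tilde{e}_j}\xi)_{\tilde{e}_j}Y,Z)$ and $\sum_j g(\xi_{R_{\xi_{\tilde{e}_j}}(\tilde{e}_j)}Y,Z)$ into the minimality criterion to obtain \eqref{eq:Kenmotsuminimal}, then split along $TM=\mathcal{E}\oplus\mathbb{R}\zeta$ and observe that the only nontrivial case $Y\in\mathcal{E}$, $Z=\zeta$ collapses to $Y\alpha=2\alpha^2{\rm Ric}(\zeta,Y)$ after clearing the common factor $1/(1+2\alpha^2)$.
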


\begin{cor}
Kenmotsu manifolds, which satisfy ${\rm Ric}(\zeta,Y)=0$ for $Y\in\mathcal{E}$ are minimal $U(n)\times 1$--structures.
\end{cor}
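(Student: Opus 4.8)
The plan is to obtain the Corollary as an immediate specialization of Theorem \ref{thm:Kenmotsuminimal}. Recall that a Kenmotsu manifold is, by definition, an $\alpha$--Kenmotsu manifold for which $\alpha$ is a nonzero constant (the usual normalization being $\alpha=1$); in particular $d\alpha=0$, so that $Y\alpha=0$ for every $Y\in\mathcal{E}$. For $\dim M\geq 5$ the local constancy of $\alpha$ is in fact automatic for any $\alpha$--Kenmotsu structure, while for $\dim M=3$ it is built into the notion of a Kenmotsu manifold; either way the relevant fact $d\alpha=0$ is available.

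First I would insert $Y\alpha=0$ into the minimality criterion of Theorem \ref{thm:Kenmotsuminimal}, which then reduces to the requirement that $0=2\alpha^2\,{\rm Ric}(\zeta,Y)$ for all $Y\in\mathcal{E}$. Next I would invoke the standing hypothesis ${\rm Ric}(\zeta,Y)=0$ for $Y\in\mathcal{E}$, which makes the right--hand side vanish identically. Hence both terms in the displayed equation of Theorem \ref{thm:Kenmotsuminimal} vanish, the criterion is satisfied, and the induced $U(n)\times 1$--structure is minimal, as claimed.

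There is no genuine obstacle: the whole analytic content has already been packaged into Theorem \ref{thm:Kenmotsuminimal}, and the Corollary only records that the two terms of its defining equation vanish under the stated hypotheses. The one point worth a short remark is that the class of such manifolds is nonempty: warped products $\mathbb{R}\times_{f}N$ with $N$ K\"ahler (taking $f=e^{t}$ to get $\alpha=1$) are Kenmotsu manifolds, and for a warped product over a one--dimensional base the mixed Ricci curvature ${\rm Ric}(\zeta,Y)$ with $Y$ tangent to the fibre vanishes automatically; thus the Corollary produces concrete minimal $U(n)\times 1$--structures, complementing the Hopf--manifold examples of Theorem \ref{thm:Vaisman}.
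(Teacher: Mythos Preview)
Your proposal is correct and follows essentially the same approach as the paper: both arguments simply observe that a Kenmotsu manifold has $\alpha=1$ constant, so $Y\alpha=0$, and then the hypothesis ${\rm Ric}(\zeta,Y)=0$ forces the remaining term in Theorem~\ref{thm:Kenmotsuminimal} to vanish. Your additional remarks on the automatic constancy of $\alpha$ in higher dimensions and on the nonemptiness of the class via warped products are supplementary context not present in the paper, but they do not alter the proof strategy.
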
 
\begin{proof}
By definition Kenmotsu manifold is an $\alpha$--Kenmotsu manifold with $\alpha=1$. Hence $Y\alpha=0$.
\end{proof}
  
Let us finish by giving one example. 
\begin{exa}
Consider the hyperbolic space $H^{2n+1}=\{(x_1,\ldots,x_{2n+1})\mid x_1>0\}$, where the Riemannian metric $g$ is of the form
\begin{equation*}
g=\frac{1}{c^2x_1^2}\sum_j dx_j^2
\end{equation*}
for some non--zero constant $c$. One can show that $H^{2n+1}$ is of constant sectional curvature $-c^2$ and induces $\alpha$--Kenmotsu structure, with $\zeta=cx_1\frac{\partial}{\partial x_1}$ and $\alpha=-c$ \cite{cg,gmc2}. Since $H^{2n+1}$ is a space form it follows that ${\rm Ric}(\zeta,Y)=0$ for $Y$ orthogonal to $\zeta$. Thus by Theorem \ref{thm:Kenmotsuminimal} $H^{2n+1}$ is a minimal $U(n)\times 1$--structure.
\end{exa}


\begin{thebibliography}{10}
\bibitem{cg} D. Chinea and J. C. Gonzalez-Davila, A classification of almost contact metric manifolds, Annali di Matematica Pura ed Applicata (4) 156 (1990), 15--36.
\bibitem{ffs} M. Falcitelli, A. Farinola, S. M. Salamon, Almost--Hermitian geometry, Differential Geom. Appl. 4 (1994), 259--282.
\bibitem{gmc1} J. C. Gonzalez-Davila, F. Martin Cabrera, Harmonic $G$--structures, Math. Proc. Cambridge Philos. Soc. 146 (2009),  no. 2, 435--459.
\bibitem{gmc2} J. C. Gonzalez-Davila, F. Martin Cabrera, Harmonic almost contact structures via the intrinsic torsion, Israel J. Math. 181 (2011), 145--187.
\bibitem{gh} A. Gray, L. Hervella, The Sixteen Classes of Almost Hermitian Manifolds and Their Linear Invariants,  Ann. Mat. Pura Appl. (4) 123 (1980), 35--58.
\bibitem{mo} A. Moroianu, Compact lcK manifolds with parallel vector fields, Complex Manifolds 2 (1) (2015), 26--33.
\bibitem{kn} K. Niedzia\l omski, Geometry of $G$--structures via the intrinsic torsion, arXiv, http://arxiv.org/abs/1503.03740
\bibitem{va} I. Vaisman, On Locally conformal almost K\"ahler manifolds, Israel J. Math. Vol 24 (1976), no. 3-4, 338--351. 
\bibitem{vl} F. Tricerri, L. Vanhecke, Curvature tensors on almost Hermitian manifolds. Trans. Amer. Math. Soc. 267 (1981), 365--398.
\bibitem{wo1} C. M. Wood, Harmonic almost--complex structures, Compositio Math. 99 (1995), 183--212.
\bibitem{wo2} C. M. Wood, Harmonic sections of homogeneous fibre bundles. Differential Geom. Appl. 19 (2003), 193--210.
\end{thebibliography}
\end{document}